\theoremstyle{plain}
\newtheorem{theorem}{Theorem}[section]
\newtheorem{conjecture}{Conjecture}
\newtheorem{lemma}{Lemma}[section]
\newtheorem{prop}{Proposition}[section]
\theoremstyle{definition}
\newtheorem{remark}{Remark}[section]
\newtheorem*{acknowledgement}{Acknowledgements}
\newcommand{\mf}[1]{\displaystyle{\mathfrak{#1}}}
\newcommand{\comment}[1]{}
\DeclareMathOperator{\spec}{\ensuremath{Spec}}
\DeclareMathOperator{\Gr}{\ensuremath{gr}}
\DeclareMathOperator{\Sym}{\ensuremath{Sym}}
\DeclareMathOperator{\Frac}{\ensuremath{Frac}}
\DeclareMathOperator{\rank}{\ensuremath{rank}}
\DeclareMathOperator{\ndex}{\ensuremath{index}}
\DeclareMathOperator{\HC}{\ensuremath{HC}}
\begin{document}
\title[the center of the enveloping algebras in large characteristic]{A generalization of Veldkamp's theorem for a class of Lie algebras}
\author{Akaki Tikaradze}
\email{ tikar06@gmail.com}
\address{University of Toledo, Department of Mathematics \& Statistics, 
Toledo, OH 43606, USA}
\begin{abstract}

A classical theorem of Veldkamp describes the center of an enveloping algebra of a Lie algebra of a semi-simple
algebraic group in characteristic $p.$ We generalize this result to a class of Lie algebras with a property that they arise as the reduction modulo
$p\gg 0$ from an algebraic  Lie algebra $\mathfrak{g},$  such that $\mathfrak{g}$ has no nontrivial semi-invariants in $\Sym(\mathfrak{g})$
and $\Sym(\mathfrak{g})^{\mf{g}}$ is a polynomial algebra.

As an application, we  solve the  derived isomorphism problem of enveloping
algebras for the above class of Lie algebras.

\end{abstract}

\maketitle

\section{Introduction}

Understanding the center of the enveloping algebra of a finite dimensional Lie algebra
in characteristic $p>0$ is of fundamental importance in the modular representation theory of Lie algebras.
Many such Lie algebras arise from a base change from a Lie algebra in characteristic 0.
Let $\mathfrak{g}$  be a Lie algebra of an algebraic group over  $S\subset\mathbb{C}$-a finitely generated ring,
let $S\to \bold{k}$ be a base change to a characteristic $p$ field. Then the center of  the enveloping algebra $U(\mathfrak{g}_{\bold{k}})$
 contains two distinguished subalgebras: the $p$-center $Z_p$ which is generated by elements of the form $g^p-g^{[p]}, g\in \mathfrak{g}_{\bold{k}},$
 and the image of $Z(U(\mathfrak{g}))$ in $Z(U(\mathfrak{g}_{\bold{k}})),$ to be denoted by $Z_{\text{HC}},$ (the Harish-Chandra part of the center).
Thus, it is a very natural to ask whether  for large enough $p$ the center of  $U(\mathfrak{g}_{\bold{k}})$
is generated by $Z_p$ and $Z_{\text{HC}}$ (stated as a conjecture in \cite{K}). 
 Unfortunately the answer is  already negative 
for indecomposable 3-dimensional solvable Lie algebras (see Remark \ref{remark}). For the case of a semi-simple $\mathfrak{g}$, the answer is positive
as follows from the classical theorem of Veldkamp \cite{V}.

The main result of this paper can be seen as a generalization of  Veldkamp's theorem for a class of Lie algebras. 

\begin{theorem}\label{center}
Let $\mathfrak{g}$ be an algebraic Lie algebra over a finitely generated ring $S\subset\mathbb{C}$,
such that $\Sym \mathfrak{g}$ has no nontrivial $\mathfrak{g}$-semi-invariants, and 
$\Sym(\mathfrak{g})^{\mathfrak{g}}=S[f_1,\cdots, f_n]$ is a polynomial algebra.
Let $g_i\in Z(U(\mathfrak{g}))$ be the symmetrization of $f_i, 1\leq i\leq n.$
Then for all primes $p\gg 0$ and a base change $S\to \bold{k}$ to a field of characteristic $p,$ the center
of $U(\mathfrak{g}_{\bold{k}})$  is a 
 free $Z_p$-module
with a basis $\lbrace g^{\alpha}=g_1^{\alpha_1}\cdots g_n^{\alpha_n}, 0\leq\alpha_i<p\rbrace.$ 
Moreover   $Z(U(\mathfrak{g}_{\bold{k}}))$ is a complete intersection ring and
$$Z(U(\mathfrak{g}_{\bold{k}}))\cong Z_p\otimes_{Z_p\cap Z_{\HC}} Z_{\HC}, \quad Z_{\HC}=U(\mathfrak{g})^{G_{\bold{k}}}.$$

%\noindent Given a Poisson maximal ideal $\mf{m}$ in $Z(U(\mathfrak{g}_{\bold{k}})$(under the Poisson bracket
%induced from the Poisson bracket on $Z(U(\mathfrak{g}_p),$)
%the Lie algebra $\mf{m}/\mf{m}^2$ is isomorphic to $\mathfrak{g}_{\bold{k}}/Z(\mathfrak{g}_{\bold{k}})\oplus V,$
%where $V$ is a trivial Lie algebra  spanned by images of $g_i-a_i, a_i\in \bold{k},$ where $g_i-a_i\in \mf{m}.$
\end{theorem}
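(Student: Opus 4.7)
The plan is to reduce via the PBW filtration to a Poisson-center computation on $\Sym(\mathfrak{g}_{\mathbf{k}})$, resolve that commutative problem using the two hypotheses on $\mathfrak{g}$, and lift back to $U(\mathfrak{g}_{\mathbf{k}})$ by a standard filtered-algebra argument. Equip $U(\mathfrak{g}_{\mathbf{k}})$ with the PBW filtration, so that $\Gr U(\mathfrak{g}_{\mathbf{k}}) = \Sym(\mathfrak{g}_{\mathbf{k}})$ as Poisson algebras and $\Gr Z(U(\mathfrak{g}_{\mathbf{k}})) \subseteq \Sym(\mathfrak{g}_{\mathbf{k}})^{\mathfrak{g}_{\mathbf{k}}}$. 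Fixing an $S$-basis $x_1,\dots,x_m$ of $\mathfrak{g}_S$, the $p$-center generators $z_i = x_i^p - x_i^{[p]}$ have symbol $x_i^p$ and each $g_i$ has symbol $\bar f_i$, so the subalgebra
\[
B := \mathbf{k}[x_1^p,\dots,x_m^p,\bar f_1,\dots,\bar f_n]
\]
is trapped between $\Gr Z(U(\mathfrak{g}_{\mathbf{k}}))$ and $\Sym(\mathfrak{g}_{\mathbf{k}})^{\mathfrak{g}_{\mathbf{k}}}$. The whole proof then rests on showing the Poisson-center identity $\Sym(\mathfrak{g}_{\mathbf{k}})^{\mathfrak{g}_{\mathbf{k}}} = B$.

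That identification is the main obstacle. I would first match fraction fields: by Jacobson--Bourbaki, $[\Frac\Sym(\mathfrak{g}_{\mathbf{k}}) : \Frac\Sym(\mathfrak{g}_{\mathbf{k}})^{\mathfrak{g}_{\mathbf{k}}}] = p^{m-n}$ because, for $p\gg 0$, the generic coadjoint orbits in $\mathfrak{g}_{\mathbf{k}}^*$ retain their characteristic-zero dimension $m-n$; and algebraic independence of the $\bar f_i$, which persists for $p\gg 0$ by spreading out, gives $[\Frac B : \mathbf{k}(x_j^p)] = p^n$, forcing $\Frac B = \Frac\Sym(\mathfrak{g}_{\mathbf{k}})^{\mathfrak{g}_{\mathbf{k}}}$. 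The no-semi-invariant hypothesis is exactly the input that, after spreading out from $\mathbb{C}$, rules out stray rational invariants outside $\mathbf{k}(x_j^p,\bar f_i)$. To upgrade fraction-field equality to ring equality, observe that $B$ is a complete intersection (hence Cohen--Macaulay) cut out in $\mathbf{k}[x_j^p,\bar f_i]$ by the $n$ relations $\bar f_i^p = F_i(x_1^p,\dots,x_m^p)$, where $F_i$ is the Frobenius twist of $f_i$; the Jacobian criterion combined with $p$-independence of the $F_i$ for $p\gg 0$ yields regularity in codimension one, hence normality of $B$. Since $\Sym(\mathfrak{g}_{\mathbf{k}})^{\mathfrak{g}_{\mathbf{k}}}$ is integral over $B$ and contained in $\Frac B$, normality of $B$ forces equality. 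Establishing this normality / $R_1$ property of $B$ is the delicate point; the rest is essentially linear algebra and generic flatness.

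Once $\Gr Z(U(\mathfrak{g}_{\mathbf{k}})) = B$ is in hand, the proof proceeds rapidly. The subalgebra $Z_p[g_1,\dots,g_n] \subseteq Z(U(\mathfrak{g}_{\mathbf{k}}))$ already has associated graded equal to $B$, so must coincide with $Z(U(\mathfrak{g}_{\mathbf{k}}))$; and the monomials $g^\alpha$ with $0\le \alpha_i < p$ are $Z_p$-free because their symbols $\bar f^\alpha$ form a free $\mathbf{k}[x_j^p]$-basis of $B$. Lifting the Poisson relation $\bar f_i^p = F_i(x_1^p,\dots,x_m^p)$ to a central relation $g_i^p = F_i(z_1,\dots,z_m) + (\text{lower PBW filtration})$, then correcting inductively in PBW degree, yields $n$ exact relations realizing $Z(U(\mathfrak{g}_{\mathbf{k}}))$ as the complete intersection $\mathbf{k}[z_j,g_i]/(g_i^p - h_i(z))$ for suitable $h_i \in \mathbf{k}[z_1,\dots,z_m]$. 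Identifying $Z_{\HC} = \mathbf{k}[g_1,\dots,g_n]$ and $Z_p\cap Z_{\HC} = \mathbf{k}[h_1(z),\dots,h_n(z)] = \mathbf{k}[g_1^p,\dots,g_n^p]$ (using that $h_1,\dots,h_n$ are algebraically independent, via the Frobenius twist of the algebraic independence of the $f_i$), the multiplication map $Z_p \otimes_{Z_p \cap Z_{\HC}} Z_{\HC} \to Z(U(\mathfrak{g}_{\mathbf{k}}))$ is seen to be an isomorphism, completing the proof.
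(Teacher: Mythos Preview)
Your overall architecture coincides with the paper's: pass to the associated graded, establish $B=\Sym(\mathfrak{g}_{\mathbf{k}})^{\mathfrak{g}_{\mathbf{k}}}$ by first matching fraction fields and then invoking normality of $B$, and lift back through the PBW filtration. Your Jacobson--Bourbaki argument for the fraction-field step is a legitimate alternative to the paper's appeal to Theorem~\ref{KW}, and the endgame (freeness, complete intersection, tensor decomposition) is handled the same way.

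The genuine gap is the $R_1$ verification for $B$. In your presentation $B\cong A^p[t_1,\dots,t_n]/(t_i^p-F_i(y))$ the $t$-partials of the relations all vanish in characteristic $p$, so the Jacobian criterion identifies the singular locus of $\spec B$ with the preimage of the set where $dF_1,\dots,dF_n$ are linearly dependent, which via Frobenius is the locus in $\mathfrak{g}_{\mathbf{k}}^*$ where $d\pi$ fails to be surjective. ``$p$-independence of the $F_i$'' only tells you this locus is a \emph{proper} closed subset; it gives no codimension bound. The codimension $\geq 2$ statement is precisely Proposition~\ref{codim2} (Panyushev--Yakimova), and \emph{that} is where the no-nontrivial-semi-invariants hypothesis is actually consumed in the proof. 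You have placed that hypothesis in the fraction-field step---where, in fact, your own Jacobson argument does not need it, since preservation of $\ndex(\mathfrak{g})$ together with $p$-independence of the $\bar f_i$ already pins down $\Frac B$---and left the normality step without the input it requires. Without the codimension-$2$ bound the argument breaks: one cannot conclude $B$ is normal, hence cannot conclude $B=\Sym(\mathfrak{g}_{\mathbf{k}})^{\mathfrak{g}_{\mathbf{k}}}$.

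A smaller omission: the theorem also asserts $Z_{\HC}=U(\mathfrak{g}_{\mathbf{k}})^{G_{\mathbf{k}}}$, not merely $Z_{\HC}=\mathbf{k}[g_1,\dots,g_n]$. Since in characteristic $p$ the $\mathfrak{g}_{\mathbf{k}}$-invariants strictly contain the $G_{\mathbf{k}}$-invariants (every $p$-th power is annihilated by derivations), identifying $U(\mathfrak{g}_{\mathbf{k}})^{G_{\mathbf{k}}}$ with $\mathbf{k}[g_1,\dots,g_n]$ requires a separate argument; the paper supplies one via an iterated Frobenius descent showing $\Sym(\mathfrak{g}_{\mathbf{k}})^{G_{\mathbf{k}}}\subset\bigcap_m(\Sym\mathfrak{g}_{\mathbf{k}})^{p^m}\mathcal{O}=\mathcal{O}$.
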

This result was obtained in \cite{T1} under the addition assumption that
 the coadjoint action of $G$ (algebraic group corresponding to $\mf{g}$) on $\spec(\Sym(\mathfrak{g})/(f_1,\cdots, f_n))\subset \mathfrak{g}^*$
has an open dense orbit.

\begin{remark}\label{remark}
 Let $\mathfrak{g}$ be the following
3-dimensional Lie algebra: $\mathfrak{g}=\mathbb{C}x\oplus \mathbb{C}y\oplus\mathbb{C}z$
with the bracket $$[z,x]=nx, [z,y]=my, [x, y]=0, n, m\in\mathbb{N}.$$
 Then for large enough $p$ and a characteristic $p$ field $\bold{k}$, we have that $Z_{\HC}=\mathbf{k}.$
 Let $1\leq a, b< p$ be such that $na=bm.$ Then $x^ay^{p-b}\in Z(U(\mathfrak{g}_{\bold{k}}))\setminus Z_p.$
Therefore  $Z(U(\mathfrak{g}_{\bold{k}}))$ is not  generated by $Z_p, Z_{\text{HC}}.$ On the other hand, $x, y\in \Sym(\mathfrak{g})$ represent
nontrivial semi-invariants.

\end{remark}

\begin{remark}\label{examples}
Let us discuss known examples of Lie algebras $\mf{g}$ satisfying the  assumption in Theorem \ref{center} 
that $\Sym(\mathfrak{g})$ has no nontrivial $\mathfrak{g}$-semi-invariants and 
$\Sym(\mathfrak{g})^{\mathfrak{g}}$ is a polynomial algebra in detail.
Let $\mathfrak{g}$ be a semi-simple Lie algebra, then for the corresponding multiparameter Takiff  algebras  (also known as truncated multicurrent algebras)
$\mathfrak{g}_m=\mathfrak{g}\otimes_{\mathbb{C}}\mathbb{C}[t_1,\cdots, t_n]/(t_1^{m_1},\cdots, t_n^{m_n})$ it is known that
$\Sym(\mathfrak{g}_m)^{\mathfrak{g}_m}$ is a polynomial algebra  \cite{MS}  (see also \cite{PY}). Since $\mathfrak{g}_m$ is a perfect Lie algebra,
it follows that multi-parameter Takiff algebra satisfy the assumption in Theorem \ref{center}.

There are a lot of results in the literature  whether $L=\mathfrak{g}\ltimes V$ has the property that  $\Sym(L)^L$ is a polynomial algebra, where
$\mathfrak{g}$ is a simple Lie algebra and $V$ its representation (as an incomplete list, see \cite{K1}, \cite{K2}, \cite{P}, \cite{PY}, \cite{Y}). Just few examples to name, the following
perfect algebras satisfy the assumption:
$\mf{sl}_n\ltimes \mathbb{C}^n, \mf{so}_n\ltimes\mathbb{C}^n, \mf{sp}_{2n}\ltimes \mathbb{C}^{2n}.$

Since nilpotent Lie algebras have no nontrivial semi-invariants, any nilpotent Lie algebra $\mathfrak{g}$
with the property that $\Sym (\mathfrak{g})^{\mathfrak{g}}$ is a polynomial algebra satisfies assumptions of Theorem \ref{center}.
The class of such Lie algebras is quite large: all but 3 indecomposable nilpotent Lie algebras of dimension $\leq 7$ \cite{O1}, nilpotent Lie algebras of index at most 2 \cite{O2}, nilradical of a borel subalgebra of a semi-simple Lie algebra [\cite{O2}, Corollary 32].

%Theorems \ref{center} and \ref{iso}  were proved in \cite{T} under much more restrictive
%assumptions on $\mathfrak{g}$: there we assumed that $(f_1, \cdots, f_n)$ is a regular sequence,
%$\Sym(\mathfrak{g})/(f_1,\cdots, f_n)$ is a normal domain and
%the coadjoint action of $G$ on $\spec(\Sym(\mathfrak{g})/(f_1,\cdots, f_n))$ has an open orbit.

\end{remark}

The proof of Theorem \ref{center} crucially relies on the proof of the  first Kac-Weisfeler conjecture for very
large primes given in \cite{T2}.

Next we apply Theorem \ref{center} to the isomorphism problem of enveloping algebras-a well-known open problem in ring theory. 
Recall that it asks whether a $\mathbb{C}$-algebra isomorphism between enveloping
algebras of Lie algebras implies an isomorphism of the underlying Lie algebras.
For the background and the detailed discussion of this problem we refer the reader to  the survey article by Usefi  \cite{U}.
Very recently,  the  solution of the isomorphism problem of enveloping algebras for nilpotent Lie algebras was presented in \cite{CPNW},
using techniques of higher category theory.

In analogy with the derived isomorphism problem for rings of differential operators
for smooth affine varieties, in \cite{T1} we considered the following natural generalization.

\begin{conjecture}
Let $\mathfrak{g}, \mathfrak{g'}$ be finite dimensional Lie algebras over $\mathbb{C}.$
If the derived categories of bounded complexes of  $U(\mathfrak{g})$-modules and $ U(\mathfrak{g'})$-modules are equivalent,
then $\mathfrak{g}\cong \mathfrak{g'}.$
\end{conjecture}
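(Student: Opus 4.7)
The plan is to prove the conjecture for the class of Lie algebras satisfying the hypotheses of Theorem~\ref{center}, leveraging the explicit description of $Z(U(\mathfrak{g}_{\bold{k}}))$ provided there in tandem with reduction modulo large primes.

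To begin, I would realize any derived equivalence $D^b(U(\mathfrak{g})\text{-mod}) \simeq D^b(U(\mathfrak{g}')\text{-mod})$ by a two-sided tilting complex (Rickard's theorem), and spread the whole picture out over a finitely generated subring $S \subset \mathbb{C}$ containing the structure constants of $\mathfrak{g}, \mathfrak{g}'$ and the entries of this complex. For $p \gg 0$ and a base change $S \to \bold{k}$ to a characteristic-$p$ field, reduction yields a derived equivalence of $U(\mathfrak{g}_{\bold{k}})$-modules and $U(\mathfrak{g}'_{\bold{k}})$-modules, and hence a ring isomorphism $\varphi\colon Z(U(\mathfrak{g}_{\bold{k}})) \xrightarrow{\sim} Z(U(\mathfrak{g}'_{\bold{k}}))$. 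Both centers carry a canonical Poisson bracket defined by lifting to $S/p^2 S$ and dividing commutators by $p$; since the tilting complex lifts uniquely to the $p$-adic formal neighborhood by standard deformation theory of tilting complexes, $\varphi$ respects this Poisson structure.

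By Theorem~\ref{center}, each center is a complete intersection of the form $Z_p \otimes_{Z_p \cap Z_{HC}} Z_{HC}$, with $p$-center $Z_p$ Poisson-isomorphic --- after Frobenius untwist --- to $\Sym(\mathfrak{g}_{\bold{k}})$ with its Kirillov--Kostant bracket, while $Z_{HC}$ lies in the Poisson center of $Z$. I would then argue that $\varphi$ must send $Z_p$ onto $Z_p'$ via an intrinsic characterization of the $p$-center using Poisson data alone --- for instance, identifying it with a subalgebra over which $Z$ is free of maximal possible rank $p^{\dim \mathfrak{g}}$ and whose spectrum recovers the scheme of symplectic leaves of the ambient Poisson scheme $\spec Z$. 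This produces a graded Poisson isomorphism $\Sym(\mathfrak{g}_{\bold{k}}) \cong \Sym(\mathfrak{g}'_{\bold{k}})$; after normalizing to be homogeneous via the Euler vector field and restricting to degree $1$, one obtains a Lie algebra isomorphism $\mathfrak{g}_{\bold{k}} \cong \mathfrak{g}'_{\bold{k}}$. Since this holds uniformly for all $p \gg 0$ in a spread-out family over $S$, a standard constructibility argument on the scheme of Lie algebra isomorphisms then lifts the conclusion to $\mathfrak{g} \cong \mathfrak{g}'$ over $\mathbb{C}$.

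The principal obstacle is the intrinsic characterization of $Z_p \subset Z(U(\mathfrak{g}_{\bold{k}}))$: a priori $\varphi$ is only a Poisson-ring isomorphism of the abstract centers, so one must argue on purely Poisson-theoretic grounds that it has to preserve the $p$-center subalgebra. A secondary subtlety is ensuring that the induced Poisson isomorphism of symmetric algebras is --- or can be normalized to be --- graded, so that its degree-$1$ restriction produces a linear isomorphism of the underlying vector spaces $\mathfrak{g}_{\bold{k}}$ and $\mathfrak{g}'_{\bold{k}}$.
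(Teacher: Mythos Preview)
The statement you are attempting is stated in the paper as a \emph{Conjecture}; the paper does not prove it. What the paper does prove is the weaker Theorem~\ref{iso}: under the hypotheses of Theorem~\ref{center}, a derived equivalence forces only $\mathfrak{g}/Z(\mathfrak{g})\cong\mathfrak{g}'/Z(\mathfrak{g}')$. So there is no ``paper's own proof'' of the full conjecture to compare against, and your proposal is aiming at a strictly stronger conclusion than the paper obtains.

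Your outline shares the paper's opening moves (spread out over $S$, reduce mod $p\gg 0$, use derived invariance of Hochschild cohomology to get a Poisson isomorphism of centers), but then diverges precisely at the point you yourself flag as the principal obstacle: an intrinsic Poisson-theoretic characterization of $Z_p$ inside $Z(U(\mathfrak{g}_{\bold{k}}))$. The paper does \emph{not} attempt this. Instead it works with maximal Poisson ideals $\mathfrak m\subset Z(U(\mathfrak{g}_{\bold{k}}))$ and analyzes the Lie algebra $\mathfrak m/\mathfrak m^2$, which is a derived invariant. Using the tensor decomposition of Theorem~\ref{center} and the Kac--Radul computation (Lemma~\ref{key}), the paper shows $\mathfrak m/\mathfrak m^2$ is a trivial central extension of $\mathfrak{g}_{\bold{k}}/Z(\mathfrak{g}_{\bold{k}})$; this is exactly why only the quotient by the center is recovered.

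Your proposed characterization of $Z_p$ is not correct as stated: by Theorem~\ref{center}, $Z(U(\mathfrak{g}_{\bold{k}}))$ is free over $Z_p$ of rank $p^{n}$ with $n=\ndex(\mathfrak{g})$, not $p^{\dim\mathfrak{g}}$, and in any case ``a subalgebra over which $Z$ is free of a given rank'' does not pin down $Z_p$ among Poisson subalgebras. There is no evident Poisson-intrinsic way to single out $Z_p$, since the restricted structure $x\mapsto x^{[p]}$ is genuinely extra data; this is the reason the paper settles for $\mathfrak{g}/Z(\mathfrak{g})$. On the other hand, your second worry (gradedness of a Poisson isomorphism $\Sym(\mathfrak{g}_{\bold{k}})\cong\Sym(\mathfrak{g}'_{\bold{k}})$) is not an issue: Lemma~\ref{trivial} shows directly that any Poisson $\bold{k}$-algebra isomorphism of symmetric algebras forces $\mathfrak{g}_{\bold{k}}\cong\mathfrak{g}'_{\bold{k}}$, with no gradedness hypothesis needed. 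So if you could bridge the $Z_p$ gap, the remainder of your argument would indeed go through---but that gap is real, and the paper does not close it.
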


In our approach to the above conjecture we follow the well-known blueprint of "dequantization"
by reducing to the modulo large prime $p$ technique, which allows a translation of questions 
about various quantizations to the ones about Poisson algebras, inspired largely by the proof of Belov-kanel an 
Kontsevich of equivalence between the Jacobian and Dixmier conjectures.
Using this approach, we proved  in \cite{T1} that certain class of Lie algebras that includes Frobenius Lie algebras
 are derived invariants
of their enveloping algebras.

%We utilized the naturality of construction of $\mathcal{L}_{p}(\mathfrak{g})$ to show that a finite group of automorphisms of
%$U(\mathfrak{g}$ must be isomorphic to a subgroup of Lie algebra automorphisms of $\mathfrak{g}$, for a semi-simple $\mathfrak{g}$
%\cite{T}.
%Now let $\mathfrak{g}$ be an algebraic Lie algebra of index 1. So,
%$Frac(\Sym(\mathfrak{g})=\mathbb{C}[t],$ where $t=\frac{f}{g} $ and $f, g$ are irreducible semi-invariants
%corresponding to the same character $\chi.$ Let $x, y$ be the symmetrizations of $f, g.$ Thus
%$x, y$ are semi-invariants with character $\chi.$ 
%Let $y_j\in U(\mathfrak{g}_{\bold{k}}$ be an element  such that $\Gr(y_j)=g^{p-j}$
%and $x^jy_{j}\in Z(U(\mathfrak{g}).$

%Let $y_p$ denote element is $Z_p$ of the smallest degree
%that is divisible by $y.$ Put $t =xy^{-1}y_p.$ Also put $t_0=fg^{p-1}\in \Sym(\mathfrak{g}_{\bold{k}})$

%\begin{theorem}\label{index1}
%Under the above assumptions $Z(U(\mathfrak{g}_{\bold{k}})$ is 
%a Cohen-Mxaulay normal domain,  free $Z_p$ module
%with a basis $\lbrace x^iy_i, 0\leq i<p\rbrace.$
%Let $m$ be a  maximal Poisson algebra of $Z(U(\mathfrak{g}_{\bold{k}})$, then $m/m^2$

%\end{theorem}

As an application of Theorem \ref{center} to the derived equivalence problem 
we have the following.

\begin{theorem}\label{iso}

Let $\mathfrak{g}, \mathfrak{g'}$ be algebraic Lie algebras satisfying assumptions in Theorem \ref{center}. If $U(\mathfrak{g})$
is derived equivalent to $U(\mathfrak{g'})$ then $\mathfrak{g}/Z(\mathfrak{g})\cong \mathfrak{g'}/Z(\mathfrak{g'}).$
%and $Z(\mathfrak{g})\cong Z(\mathfrak{g'}).$
\end{theorem}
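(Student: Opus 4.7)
Following the reduction-modulo-$p$ strategy of \cite{T1}, my plan is to promote the derived equivalence to a Poisson isomorphism of the centers of the modular reductions, apply the explicit description of these centers provided by Theorem \ref{center} to extract a Poisson isomorphism of symmetric algebras, and recover from it the desired Lie isomorphism modulo center.

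I would begin by spreading out both Lie algebras together with a tilting complex realizing the derived equivalence over a single finitely generated ring $S\subset\mathbb{C}$. For every base change $S\to \bold{k}$ to a field of sufficiently large characteristic $p$, this yields a derived equivalence $D^b(U(\mathfrak{g}_{\bold{k}}))\simeq D^b(U(\mathfrak{g}'_{\bold{k}}))$, hence a ring isomorphism $\varphi\colon Z(U(\mathfrak{g}_{\bold{k}}))\to Z(U(\mathfrak{g}'_{\bold{k}}))$. Each such center carries a canonical Poisson bracket obtained from the $S$-model by lifting a central pair, commuting them in the lift, and dividing the commutator by $p$; because this bracket is intrinsic to the lifting data, $\varphi$ automatically respects it.

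Next, Theorem \ref{center} supplies the explicit presentation $Z(U(\mathfrak{g}_{\bold{k}}))\cong Z_p\otimes_{Z_p\cap Z_{\HC}} Z_{\HC}$, exhibiting $Z_p$ as a distinguished Poisson subalgebra Frobenius-isomorphic to $\Sym(\mathfrak{g}_{\bold{k}})$ with its Kirillov-Kostant bracket, while $Z_{\HC}$, which lifts to the center in characteristic $0$, lies in the Poisson center. Assuming one can characterize $Z_p$ intrinsically from the Poisson data of $Z(U(\mathfrak{g}_{\bold{k}}))$, $\varphi$ restricts to a Poisson isomorphism $\Sym(\mathfrak{g}_{\bold{k}})\cong \Sym(\mathfrak{g}'_{\bold{k}})$. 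Since the Lie bracket on $\mathfrak{g}_{\bold{k}}$ coincides with the degree-one part of the Kirillov-Kostant bracket on linear functions, and its kernel is exactly $Z(\mathfrak{g}_{\bold{k}})$, this produces an isomorphism $\mathfrak{g}_{\bold{k}}/Z(\mathfrak{g}_{\bold{k}})\cong \mathfrak{g}'_{\bold{k}}/Z(\mathfrak{g}'_{\bold{k}})$ for each such $\bold{k}$; a standard density or ultraproduct argument over the infinitely many available $p$ then lifts this to the required isomorphism in characteristic $0$.

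The principal obstacle is the intrinsic characterization of $Z_p$ inside $Z(U(\mathfrak{g}_{\bold{k}}))$ using only Poisson-theoretic invariants, since $\varphi$ need not send $Z_p$ to the corresponding $p$-center of $U(\mathfrak{g}'_{\bold{k}})$ on the nose. A plausible route is to single out $Z_p$ as the subalgebra over which $U(\mathfrak{g}_{\bold{k}})$ is generically Azumaya of the maximal PI-degree, or via a Frobenius-type description of the Poisson center modulo $Z_{\HC}$ that exploits the polynomial invariance hypothesis of Theorem \ref{center}. A secondary subtlety is that the induced Poisson isomorphism of symmetric algebras need not respect polynomial degree, so one must refine it through the associated graded of a suitable filtration before reading off the Lie isomorphism from its degree-one part.
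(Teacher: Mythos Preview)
Your overall framework matches the paper's: spread out over a finitely generated $S\subset\mathbb{C}$, reduce modulo large $p$, and use derived invariance of Hochschild cohomology to obtain a Poisson isomorphism $\varphi\colon Z(U(\mathfrak{g}_{\bold{k}}))\to Z(U(\mathfrak{g}'_{\bold{k}}))$. The gap is precisely the step you yourself flag as the principal obstacle: you need $\varphi$ to carry $Z_p$ onto the $p$-center on the other side, and neither route you sketch actually delivers this. In particular the Azumaya-locus suggestion does not work as stated, because that locus is a feature of the full enveloping algebra, not of the center with its Poisson structure alone, so an abstract Poisson isomorphism of centers has no a priori reason to respect it. Your secondary subtlety (that a Poisson isomorphism of symmetric algebras need not be graded) is real but is handled immediately by passing to cotangent spaces (Lemma~\ref{trivial}); it is the first obstacle that is the genuine hole.

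The paper never attempts to match the $p$-centers. It uses instead an invariant that is \emph{manifestly} preserved by $\varphi$: the Lie algebra $\mathfrak{m}/\mathfrak{m}^2$ for $\mathfrak{m}$ a maximal Poisson ideal with residue field $\bold{k}$. Writing $\mathfrak{m}'=\mathfrak{m}\cap Z_p$, $\mathfrak{m}''=\mathfrak{m}\cap Z_{\HC}$, $\mathfrak{m}_1=\mathfrak{m}'\cap\mathfrak{m}''$, the tensor decomposition of Theorem~\ref{center} gives $\mathfrak{m}/\mathfrak{m}^2\cong \mathfrak{m}'/\mathfrak{m}'^{2}\times_{\mathfrak{m}_1/\mathfrak{m}_1^2}\mathfrak{m}''/\mathfrak{m}''^{2}$. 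Since $\mathfrak{m}'/\mathfrak{m}'^2\cong\mathfrak{g}_{\bold{k}}$ (Lemmas~\ref{key} and~\ref{trivial}) while the $Z_{\HC}$-factor is Poisson-central hence abelian, one checks using $\Gr(Z_p\cap Z_{\HC})=\bold{k}[f_1^p,\dots,f_n^p]$ that $\mathfrak{m}/\mathfrak{m}^2$ is a \emph{trivial} central extension of $\mathfrak{g}_{\bold{k}}/Z(\mathfrak{g}_{\bold{k}})$. Then $\varphi$ matches this with the analogous extension on the $\mathfrak{g}'$ side, and the compatibility of $\varphi$ with the characteristic-zero isomorphism $Z(U(\mathfrak{g}))\cong Z(U(\mathfrak{g}'))$ of Harish-Chandra centers forces the abelian complements to have equal dimension, whence $\mathfrak{g}_{\bold{k}}/Z(\mathfrak{g}_{\bold{k}})\cong \mathfrak{g}'_{\bold{k}}/Z(\mathfrak{g}'_{\bold{k}})$. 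The missing idea, in short: do not try to locate $Z_p$ intrinsically; compute the cotangent Lie algebra at a closed Poisson point instead.
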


\section{The first Kac-Weisfeiler conjecture}

In this section we  recall some results associated with the first Kac-Weisfeiler conjecture that are used
in proof of our main results.
Recall that the first Kac-Weisfeiler conjecture asserts that for a $p$-restricted Lie algebra $\mathfrak{g}$
over
$\bold{k}$, the maximal  possible dimension  of an irreducible $\mathfrak{g}$-module is
$p^{\frac{1}{2}(\dim (\mathfrak{g})- \ndex(\mathfrak{g}))}$. Equivalently
the rank of $U(\mathfrak{g})$ over its center 
equals $p^{\dim (\mathfrak{g})-\ndex(\mathfrak{g})}.$

Let $\mathfrak{g}$ be a Lie algebra of an algebraic group $G$ defined over a finitely generated
ring $S\subset \mathbb{C}.$ Then for all $p\gg 0$ and a base change $S\to\mathbf{k}$ to an algebraically
closed field of characteristic $p,$ the first Kac-Weisfeiler conjecture was established for $\mathfrak{g}_{\bold{k}}$ 
by Martin, Stewart, and  Topley. Also proved independently by the author.
Namely we make use of the following result.
%As usual, $D(\mathfrak{g})$ denotes the skew field of fractions of $U(\mathfrak{g}.$
%Similarly, by $C(\mathfrak{g})$ we will denote the field of fractions of $\Sym (\mathfrak{g}).$
\begin{theorem}[\cite{T2}, Theorems 3.8 and 3.9]\label{KW}
Let $\mathfrak{g}$ be an algebraic Lie algebra over a finitely generated ring $S\subset\mathbb{C}.$
Then for all $p\gg 0$ and a base change $S\to\mathbf{k}$ to an algebraically closed field of characteristic $p$,
  the fraction field of $Z(U(\mathfrak{g}_{\bold{k}}))$ 
is generated by the image of the center of $\Frac(U(\mathfrak{g}))$  and  $\Frac(Z_p(\mathfrak{g}_{\bold{k}})).$
Also, the following equality of degrees of field extensions holds:\

$$[\Frac(Z(U(\mathfrak{g}_{\bold{k}}))): \Frac(Z_p(\mathfrak{g}_{\bold{k}}))]=p^{\ndex(\mathfrak{g})}=
[\Frac(\Sym(\mathfrak{g}_{\bold{k}})^{\mathfrak{g}_{\bold{k}}}): \Frac(\Sym(\mathfrak{g}_{\bold{k}})^p)]$$

\end{theorem}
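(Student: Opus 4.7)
The plan is to split the theorem into three claims: (i) the rightmost equality $[\Frac(\Sym(\mathfrak{g}_\bold{k})^{\mathfrak{g}_\bold{k}}):\Frac(\Sym(\mathfrak{g}_\bold{k})^p)]=p^{\ndex(\mathfrak{g})}$, a statement purely about Poisson invariants in positive characteristic; (ii) the enveloping-side equality $[\Frac(Z(U(\mathfrak{g}_\bold{k}))):\Frac(Z_p(\mathfrak{g}_\bold{k}))]=p^{\ndex(\mathfrak{g})}$, sandwiched from above by a PBW/symbol argument and from below by lifting from characteristic zero; and (iii) the generation statement, which falls out of the lifting used in (ii).

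For (i), the Leibniz rule $\{x,y^p\}=py^{p-1}\{x,y\}=0$ in characteristic $p$ forces $\Sym(\mathfrak{g}_\bold{k})^p\subseteq\Sym(\mathfrak{g}_\bold{k})^{\mathfrak{g}_\bold{k}}$. Writing $K=\Frac(\Sym(\mathfrak{g}_\bold{k}))$, the tower $K^p\subseteq K^{\mathfrak{g}_\bold{k}}\subseteq K$ is purely inseparable of height one with $[K:K^p]=p^{\dim\mathfrak{g}}$. The Jacobson Galois correspondence identifies intermediate fields with restricted $K$-subspaces of $\mathrm{Der}_{K^p}(K)$, and $K^{\mathfrak{g}_\bold{k}}$ corresponds to the $K$-span of the Hamiltonian derivations $\{g,\cdot\}$, $g\in\mathfrak{g}_\bold{k}$. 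The $K$-dimension of this span is the generic rank of the Kirillov--Poisson bivector on $\mathfrak{g}_\bold{k}^*$, which for $p\gg 0$ coincides with $\dim\mathfrak{g}-\ndex(\mathfrak{g})$ by semi-continuity from characteristic zero. This yields $[K:K^{\mathfrak{g}_\bold{k}}]=p^{\dim\mathfrak{g}-\ndex(\mathfrak{g})}$ and the claim.

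For (ii), equip $U=U(\mathfrak{g}_\bold{k})$ with the PBW filtration and induce filtrations on $Z=Z(U)$ and on $Z_p$. From $[g,z]=0$ it follows that $\{g,\sigma(z)\}=0$ on leading symbols, so $\mathrm{gr}\,Z\subseteq\Sym(\mathfrak{g}_\bold{k})^{\mathfrak{g}_\bold{k}}$, while $\mathrm{gr}\,Z_p=\Sym(\mathfrak{g}_\bold{k})^p$ because $g^p-g^{[p]}$ has symbol $g^p$. Combining these inclusions with (i), and using that $U$ is a free $Z_p$-module of rank $p^{\dim\mathfrak{g}}$ with a compatible filtered-graded comparison, yields $[\Frac(Z):\Frac(Z_p)]\leq p^{\ndex(\mathfrak{g})}$. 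For the matching lower bound, take a transcendence basis $z_1,\ldots,z_r$, $r=\ndex(\mathfrak{g})$, of the center of $\Frac(U(\mathfrak{g}))$ over $\mathbb{C}$; after enlarging $S$ and clearing denominators, specialize to obtain $\bar z_1,\ldots,\bar z_r\in Z(\Frac(U(\mathfrak{g}_\bold{k})))$ whose leading symbols are algebraically independent in $\Sym(\mathfrak{g}_\bold{k})^{\mathfrak{g}_\bold{k}}$ modulo $\Sym(\mathfrak{g}_\bold{k})^p$. This forces $[\Frac Z_p(\bar z_1,\ldots,\bar z_r):\Frac Z_p]=p^{\ndex(\mathfrak{g})}$, yielding equality in (ii) and simultaneously the generation statement (iii).

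The main obstacle is the lifting step. Transcendence-degree counts over $\bold{k}$ are useless here, because $\Frac Z_p$ already has transcendence degree $\dim\mathfrak{g}$; the argument must be carried out at the level of purely inseparable degrees modulo $p$-th powers. Guaranteeing that the symbols of the specialized lifts $\bar z_i$ generate $p^{\ndex(\mathfrak{g})}$ distinct classes modulo $\Sym(\mathfrak{g}_\bold{k})^p$ demands a careful choice of integral models for the $z_i$ together with the semi-continuity of the Poisson rank, which is precisely why the restriction $p\gg 0$ appears. As an independent cross-check, the classical Kac--Weisfeiler upper bound on dimensions of simple $\mathfrak{g}_\bold{k}$-modules yields $[\Frac Z:\Frac Z_p]\geq p^{\ndex(\mathfrak{g})}$ unconditionally, confirming that the constructed lifts must in fact exhaust $\Frac Z$ over $\Frac Z_p$.
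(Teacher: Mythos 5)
This statement is not proved in the paper at all: it is imported verbatim from \cite{T2} (and Martin--Stewart--Topley), and the only in-paper justification is the subsequent Remark, which assembles the rightmost equality from two cited inequalities (the bound $[\Frac(\Gr Z(U(\mathfrak{g}_{\mathbf{k}}))):\Frac(\Sym(\mathfrak{g}_{\mathbf{k}})^p)]\geq p^{\ndex(\mathfrak{g})}$ from \cite{T2} and the Premet--Skryabin bound from \cite{PS}). Measured against that, the parts of your proposal that work are exactly the ``easy'' halves. Your claim (i) via Jacobson's purely inseparable Galois correspondence is essentially correct and is a clean self-contained substitute for citing \cite{PS}: the $K$-span of the Hamiltonian derivations is indeed a restricted $K$-subspace, its $K$-dimension is the generic rank of the matrix $([x_i,x_j])$, and that rank equals $\dim\mathfrak{g}-\ndex(\mathfrak{g})$ for $p\gg 0$. (You should add the small observation that $\Frac(\Sym(\mathfrak{g}_{\mathbf{k}})^{\mathfrak{g}_{\mathbf{k}}})$ coincides with the invariant subfield $K^{\mathfrak{g}_{\mathbf{k}}}$, which holds in characteristic $p$ because $a/b=ab^{p-1}/b^p$ with $b^p$ invariant.) The upper bound $[\Frac Z:\Frac Z_p]\leq p^{\ndex(\mathfrak{g})}$ via $\Gr Z\subseteq \Sym(\mathfrak{g}_{\mathbf{k}})^{\mathfrak{g}_{\mathbf{k}}}$ and $\Gr Z_p=\Sym(\mathfrak{g}_{\mathbf{k}})^p$ is likewise fine.

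The genuine gap is the lower bound and the generation statement, which together are the entire content of the first Kac--Weisfeiler conjecture in large characteristic. You assert that a transcendence basis $z_1,\dots,z_r$ of the center of $\Frac(U(\mathfrak{g}))$ admits integral models whose reductions have symbols that are $p$-independent over $\Frac(Z_p)$, and you explicitly defer this as ``the main obstacle'' without supplying the argument; but this is precisely what Theorems 3.8 and 3.9 of \cite{T2} prove, and it does not follow from semi-continuity of the Poisson rank alone (one must control the symbols of the specialized central elements, not just their algebraic independence in characteristic zero). Worse, your proposed ``independent cross-check'' is circular: the upper bound $\dim M\leq p^{\frac{1}{2}(\dim\mathfrak{g}-\ndex(\mathfrak{g}))}$ on irreducible modules is not classical --- it \emph{is} the hard direction of the first Kac--Weisfeiler conjecture, and it is equivalent (via $\mathrm{PI\text{-}deg}(U)^2\cdot[\Frac Z:\Frac Z_p]=p^{\dim\mathfrak{g}}$) to the inequality $[\Frac Z:\Frac Z_p]\geq p^{\ndex(\mathfrak{g})}$ you are trying to prove. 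The classical, unconditional results (Zassenhaus, Premet--Skryabin) go the other way: they produce an irreducible module of dimension at least $p^{\frac{1}{2}(\dim\mathfrak{g}-\ndex(\mathfrak{g}))}$ and hence yield only the upper bound $[\Frac Z:\Frac Z_p]\leq p^{\ndex(\mathfrak{g})}$, which you already have. So the cross-check cannot patch the hole, and the hard half of the theorem remains unproved in your write-up.
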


\begin{remark}
The equality 
$$[\Frac(\Sym(\mathfrak{g}_{\bold{k}})^{\mathfrak{g}_{\bold{k}}}): \Frac(\Sym(\mathfrak{g}_{\bold{k}}))^p]=p^{\ndex(\mathfrak{g})}$$
follows from the fact that on the one hand (as proved in [\cite{T2} Theorem 3.8, 3.9])
$$[\Frac(\Gr Z(U(\mathfrak{g}_{\bold{k}})): \Frac(\Sym(\mathfrak{g}_{\bold{k}}))^p]\geq p^{\ndex(\mathfrak{g})},$$
and on the other hand it was proved in \cite{PS} that 
$$[\Frac(\Sym\mathfrak{g}_{\bold{k}}): \Frac(\Sym(\mathfrak{g}_{\bold{k}})^{\mathfrak{g}_{\bold{k}}})]\geq p^{\dim (\mathfrak{g})-\ndex(\mathfrak{g})}.$$

\end{remark}

We also need to recall the following simple result from commutative algebra (for a proof see [\cite{T2} Lemma 3.11]. )

\begin{lemma}\label{rank}
Let $S\subset\mathbb{C}$ be a finitely generated ring.
Let $A$ be a finitely generated commutative algebra over $S$ such that $A_{\mathbb{C}}$ is a domain.
 Let $B\subset A$ be a finitely generated  $S$-subalgebra.
Then for all $p\gg 0$ and a base change $S\to\bold{k}$ to an algebraically closed field $\bold{k}$ of characteristic $p$
the rank of $A_{\bold{k}}$ over $B_{\bold{k}}A_{\bold{k}}^p$ is  $p^{\dim(A)-\dim(B)}.$
\end{lemma}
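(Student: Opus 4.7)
The plan is to reduce the rank question to a field-degree computation and exhibit a separating transcendence basis of $\Frac(A_{\bold{k}})/\bold{k}$ whose first $\dim B$ elements lie inside $B$.

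Set $d=\dim A_{\mathbb{C}}$ and $e=\dim B_{\mathbb{C}}$. After localizing $S$, we may assume $B_S\hookrightarrow A_S$ is flat, $A_{\bold{k}}$ and $B_{\bold{k}}$ are domains of Krull dimensions $d,e$ for every $p\gg 0$, and that there exist $b_1,\dots,b_e\in B$ and $a_{e+1},\dots,a_d\in A$ such that $b_1,\dots,b_e$ is a transcendence basis of $\Frac(B_{\mathbb{C}})/\mathbb{C}$ and $b_1,\dots,b_e,a_{e+1},\dots,a_d$ is a transcendence basis of $\Frac(A_{\mathbb{C}})/\mathbb{C}$, with $B_S/S[b_1,\dots,b_e]$ and $A_S/S[b_1,\dots,b_e,a_{e+1},\dots,a_d]$ generically finite \'etale. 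For all $p\gg 0$ this \'etaleness descends, making $\Frac(B_{\bold{k}})/\bold{k}(b_1,\dots,b_e)$ and $\Frac(A_{\bold{k}})/\bold{k}(b_1,\dots,b_e,a_{e+1},\dots,a_d)$ separable algebraic. Hence $b_1,\dots,b_e,a_{e+1},\dots,a_d$ is a separating transcendence basis of $\Frac(A_{\bold{k}})/\bold{k}$; since $\bold{k}$ is perfect, such a basis is automatically a $p$-basis, so the $p^d$ monomials $b_1^{\alpha_1}\cdots b_e^{\alpha_e}a_{e+1}^{\beta_{e+1}}\cdots a_d^{\beta_d}$ with $0\le\alpha_i,\beta_j<p$ form a basis of $\Frac(A_{\bold{k}})$ over $\Frac(A_{\bold{k}})^p=\Frac(A_{\bold{k}}^p)$.

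The key step is the identity $\Frac(B_{\bold{k}}A_{\bold{k}}^p)=\Frac(A_{\bold{k}}^p)(b_1,\dots,b_e)$. Any $c\in B_{\bold{k}}$ is separable over $\bold{k}(b_1,\dots,b_e)$ by the previous paragraph, while $c^p\in B_{\bold{k}}^p\subset A_{\bold{k}}^p$ shows $c$ is purely inseparable over $\Frac(A_{\bold{k}}^p)$; over the compositum both properties persist, so $c$ must lie in the compositum. The resulting field has degree at most $p^e$ over $\Frac(A_{\bold{k}}^p)$ (each $b_i^p\in A_{\bold{k}}^p$), and the monomials $b_1^{\alpha_1}\cdots b_e^{\alpha_e}$ are part of the basis displayed above, hence linearly independent over $\Frac(A_{\bold{k}}^p)$; so the degree is exactly $p^e$. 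Multiplicativity of degrees then gives $[\Frac(A_{\bold{k}}):\Frac(B_{\bold{k}}A_{\bold{k}}^p)]=p^{d-e}$. Since the Frobenius makes $A_{\bold{k}}$ module-finite over $A_{\bold{k}}^p\subset B_{\bold{k}}A_{\bold{k}}^p$, this field degree equals the generic rank of $A_{\bold{k}}$ as a $B_{\bold{k}}A_{\bold{k}}^p$-module, which is the rank claimed.

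The main technical point is arranging for the generic \'etaleness of the two chosen finite extensions to persist modulo every sufficiently large prime; this is a standard consequence of generic freeness combined with nonvanishing of the relevant discriminants away from a finite set of primes of $S$.
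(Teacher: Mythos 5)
Your argument is correct. Note that the paper does not prove Lemma \ref{rank} at all --- it only cites [T2, Lemma 3.11] --- so there is no in-text proof to compare against; your reduction to fraction fields, the spreading-out of generic \'etaleness to get a separating transcendence basis adapted to $B\subset A$, and the ``separable + purely inseparable $\Rightarrow$ already in the compositum'' step yielding $\Frac(B_{\bold{k}}A_{\bold{k}}^p)=\Frac(A_{\bold{k}}^p)(b_1,\dots,b_e)$ of degree exactly $p^{\dim B}$ over $\Frac(A_{\bold{k}}^p)$ is precisely the standard argument such a reference uses, and all the steps (including the identification of the generic module rank with the field degree) check out.
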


\section{The proof of Theorem \ref{center}}

We crucially rely on the following result  from Panyushev-Yakimova [\cite{PY}, Remark 1.3] 
(see also [\cite{PY}, Proposition 1.2], [\cite{JS} Proposition 5.2]).

\begin{prop}\label{codim2}
Let $\mathfrak{g}$ be a an algebraic Lie algebra over $\mathbb{C}$ such that 
 $\Sym(\mathfrak{g})$ has no nontrivial $\mathfrak{g}$-semi-invariants.
Let $\mathcal{O}=\Sym (\mathfrak{g})^{\mathfrak{g}}$ be finitely generated.
Put $Y=\spec  \mathcal{O}.$ We have the quotient map $\pi:\mathfrak{g}^*=\spec \Sym(\mathfrak{g})\to Y.$ 
Denote by $Y_{sm}$ the smooth locus of $Y.$
Let $U=\lbrace x\in \mathfrak{g}^*, \pi(x)\in Y_{sm},  d\pi_{x} \text{is onto}\rbrace.$
Then $\mathfrak{g}^*\setminus U$
has  codimension $\geq 2$ in $\mathfrak{g}^*.$
\end{prop}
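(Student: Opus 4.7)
The plan is to decompose $\mathfrak{g}^*\setminus U$ into two $\mathfrak{g}$-stable proper closed subsets and to show, by a direct divisor argument exploiting the absence of nontrivial semi-invariants, that each has codimension at least two.

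First, write $\mathfrak{g}^*\setminus U = A\cup B$, where $A = \pi^{-1}(Y\setminus Y_{sm})$ is the preimage of the singular locus and $B = \{x\in \mathfrak{g}^* : d\pi_x \text{ is not onto}\}$ is the critical locus of $\pi$. Both sets are closed in $\mathfrak{g}^*$ (the singular locus of the reduced affine variety $Y$ is closed, and rank-drop loci of a morphism are closed), and both are $\mathfrak{g}$-stable: $A$ because $\pi$ is $\mathfrak{g}$-invariant, and $B$ because for $g$ in the connected group $G^\circ$ with Lie algebra $\mathfrak{g}$ the identity $\pi\circ g = \pi$ differentiates to $d\pi_{gx}\circ dg_x = d\pi_x$, so rank drops are $G^\circ$-invariant. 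Moreover, $A\ne\mathfrak{g}^*$ because $\mathcal{O}$ is a domain, so $Y$ is an irreducible affine variety and $Y_{sm}$ is dense in $Y$; and $B\ne\mathfrak{g}^*$ because $\pi$ is dominant and we work in characteristic zero, so generic smoothness provides an open dense subset of $\mathfrak{g}^*$ on which $d\pi_x$ is onto.

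The crux is the following divisor lemma: any $\mathfrak{g}$-stable proper closed subvariety $Z\subset \mathfrak{g}^*$ has codimension at least two. If some irreducible component $D$ of $Z$ were a divisor, then because $G^\circ$ is connected it would preserve $D$ setwise, and hence preserve its height-one prime ideal $I_D\subset \Sym(\mathfrak{g})$. Since $\Sym(\mathfrak{g})$ is a UFD, $I_D=(f)$ for some non-constant $f$; the $G^\circ$-stability of the line $\mathbb{C}f$ then forces $g\cdot f = \chi(g) f$ for some character $\chi$ of $G^\circ$, so $f$ is a nontrivial $\mathfrak{g}$-semi-invariant, contradicting the hypothesis. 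Applying the lemma to both $A$ and $B$ and taking the union completes the proof. The point I expect to require the most care is the $\mathfrak{g}$-stability and properness of the critical locus $B$; once the divisor lemma is isolated, it is essentially a one-line UFD-plus-semi-invariant argument, and the remainder is bookkeeping.
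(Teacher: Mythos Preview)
The paper does not give its own proof of this proposition; it is quoted from Panyushev--Yakimova [\textit{PY}, Remark~1.3, Proposition~1.2] and Joseph--Shafrir [\textit{JS}, Proposition~5.2]. So there is no in-paper argument to compare against, and I focus on the correctness of your outline.

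Your ``divisor lemma'' is false as stated, and this is a genuine gap. The hypothesis that $\Sym(\mathfrak{g})$ has no nontrivial $\mathfrak{g}$-semi-invariants means precisely that every semi-invariant has \emph{trivial} weight, i.e.\ lies in $\mathcal{O}=\Sym(\mathfrak{g})^{\mathfrak{g}}$; it does not forbid non-constant invariants. Hence for any non-constant $f\in\mathcal{O}$ the hypersurface $V(f)\subset\mathfrak{g}^*$ is $G^\circ$-stable, proper, and of codimension one --- a direct counterexample to your lemma. In your UFD step the generator $f$ of $I_D$ is indeed a semi-invariant, but nothing forces its character $\chi$ to be nontrivial, and when $\chi=0$ there is no contradiction with the hypothesis.

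The repair must use the specific shape of $\mathfrak{g}^*\setminus U$, not merely its $G$-stability. First note that $\mathcal{O}$ is integrally closed (if $a,b\in\mathcal{O}$ and $c=a/b\in\Sym(\mathfrak{g})$, then $0=g\cdot a=(g\cdot b)c+b(g\cdot c)=b(g\cdot c)$ gives $c\in\mathcal{O}$), so $Y$ is normal and $Y_{sing}$ has codimension $\geq 2$ in $Y$. Now suppose a divisor component $D$ of $\mathfrak{g}^*\setminus U$ has $I_D=(f)$ with $f\in\mathcal{O}$. Then $D=\pi^{-1}(V_Y(f))$ and $V_Y(f)$ is a divisor in $Y$. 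By generic smoothness in characteristic zero (the source $\mathfrak{g}^*$ is smooth and $\pi$ is dominant) there is a nonempty open $V\subset Y$ over which $\pi$ is smooth and surjective; since $V_Y(f)$ has codimension one it meets $V$, and any $x\in\pi^{-1}(V_Y(f)\cap V)$ lies in $D\cap U$, contradicting $D\subset\mathfrak{g}^*\setminus U$. With this extra case added to handle $\chi=0$, your strategy goes through; without it the key lemma simply fails. (A smaller point: your claim that $B$ is closed because ``rank-drop loci are closed'' needs care, since $\dim T_{\pi(x)}Y$ jumps over $Y_{sing}$; the clean formulation is $\mathfrak{g}^*\setminus U=A\cup\{x:\operatorname{rank}d\pi_x<\dim Y\}$, where the second set is a genuine rank-drop locus after fixing generators of $\mathcal{O}$.)
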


\begin{proof}[Proof of  Theorem \ref{center}]
%To check that $\lbrace g^{\alpha}=\prod_{i=1}^n g_i^{\alpha_i}, 0\leq \alpha_i<p\rbrace$ is a basis of
%$Z(U(\mathfrak{g}_p)$ over $Z_p(\mathfrak{g}_p),$
%it suffices to show that $\lbrace g^{\alpha}, 0\leq \alpha_i<p\rbrace$ forms a $Z_p(\mathfrak{g})$-basis
%of $Z(U(\mathfrak{g}_{\bold{k}}).$

Since $\mathfrak{g}$ has no nontrivial semi-invariants in $\Sym (\mathfrak{g})$, it follows easily that
$$(\Frac(\Sym (\mathfrak{g})))^{\mathfrak{g}}=\Frac(\Sym (\mathfrak{g})^{\mathfrak{g}}).$$ 
 Put 
 $$\mathcal{O}=\bold{k}[f_1,\cdots, f_n], \quad A=\Sym(\mathfrak{g}_{\bold{k}}),\quad B=\Sym(\mathfrak{g}_{\bold{k}})^p\mathcal{O},\quad 
 B'=(\Sym \mathfrak{g}_{\bold{k}})^{\mathfrak{g}_{\bold{k}}}.$$ 
Clearly $B\subset B'.$ Let $K=\Frac(A)$ and $K_0=\Frac(B).$

We first claim that $$[K_0:K^p]=p^n.$$
Indeed, this follows at once since the degree of $K$ over $K_0$ is $p^{\dim(\mathfrak{g})-n}$ by Lemma \ref{rank},
and 
$$[K: K^p]=p^{\dim \mathfrak{g}}.$$

Next we argue that $B$ a normal domain.
Indeed, it follows from Proposition \ref{codim2} that for all $p\gg 0$ and a base change $S\to \mathbf{k},$ the compliment of 
$$U_{\bold{k}}=\lbrace x\in \mathfrak{g}_{\bold{k}}^*,  d\pi_{x} \text{ is onto}\rbrace.$$
in ${\mathfrak{g}}_{\bold{k}}^*$ has codimension $\geq 2.$ 
We claim that the multiplication map $\phi$ induces an isomorphism 
$$ \phi:A^p\otimes_{\mathcal{O}^p} \mathcal{O}\cong B.$$
Indeed, $\phi$ is a surjective map from a free $A^p$-module of rank $p^n$  onto a $A^p$-module of rank $p^n.$ Thus $\phi$ must be an isomorphism.
 If $I\in U_{\bold{k}},$ put $I'=I\cap\mathcal{O}.$ It follows easily that  $A^p_{I}\otimes_{\mathcal{O}_{I}^p} \mathcal{O}_{I'}$ is a regular ring.
Hence $B$ is a Cohen-Macaulay ring regular in codimension 1. Therefore it is a normal domain by Serre's criterion of normality.
     
     It follows that $B'$ and $B$ have the same field of fractions
as they are both extensions of degree $p^n$ of $A^p$ by Theorem \ref{KW} and
$B\subset B'$. Thus, normality of $B$ yields that $B=B'.$ Hence 
$$\Sym (\mathfrak{g}_{\bold{k}})^{\mathfrak{g}_{\bold{k}}}=\Sym (\mathfrak{g}_{\bold{k}})^p[f_1,\cdots,f_m],\quad 
Z(U(\mathfrak{g}_{\bold{k}}))=Z_p[g_1,\cdots, g_m].$$

Next we show that $B$ is a free $A^p$-module with basis $\lbrace f^{\alpha}=\prod_{i=1}^n f_i^{\alpha_i}, \alpha_i<p\rbrace.$
Indeed, since $[K_0:K^p]=p^n,$ it follows that $\lbrace f^{\alpha}, \alpha_i<p\rbrace$
are linearly independent over $K^p.$ In particular they form a basis of $B$ over $A^p.$
Hence, $Z(U(\mathfrak{g}_{\bold{k}}))$ is a free module over $Z_p$ with a basis $\lbrace g^{\alpha}, \alpha_i<p\rbrace$ as desired.
Thus we obtain that 
$$ \Gr Z(U(\mathfrak{g}_{\bold{k}}))=B=(\Sym (\mathfrak{g}_{\bold{k}}))^{\mathfrak{g}_\bold{k}}.$$

We have the natural surjective homomorphism (that restricts to the Frobenius homomorphism on $A$ and sends $x_i$ to $f_i^p$)
$$A[x_1,\cdots, x_n]/(x_1^p-f_1,\cdots, x_n^p-f_n)\to B,$$
which must be an isomorphism since both rings are free $A^p$-modules of rank $p^n.$
 Since $(x_1^p-f_1,\cdots, x_n^p-f_n)$
is a regular sequence in $A[x_1,\cdots, x_n],$ it follows that $B$ is a complete intersection.
Therefore $Z(U(\mathfrak{g}_{\bold{k}}))$ is also a complete intersection ring.

Our next goal is show that $\Sym (\mathfrak{g}_{\bold{k}})^{G_{\bold{k}}}=\bold{k}[f_1,\cdots, f_n],$
which implies $U(\mathfrak{g})^{G_{\bold{k}}}=Z_{\HC}.$
Let $x\in \Sym (\mathfrak{g}_{\bold{k}})^{G_{\bold{k}}}\subset B.$ 
As $B$ is a free $\Sym (\mathfrak{g}_{\bold{k}})^p$ module with basis $\lbrace f^{\alpha}=\prod_{i=1}^n f_i^{\alpha_i}, \alpha_i<p\rbrace,$
we may write
$$x=\sum_{\alpha}x_{\alpha}f^{\alpha}, \quad x_{\alpha}\in (\Sym \mathfrak{g}_{\bold{k}})^p, \alpha_i<p.$$ 
Then $$x_{\alpha}\in (\Sym (\mathfrak{g}_{\bold{k}})^{G_{\bold{k}}})^p.$$
Replacing $x$ by $x_{\alpha}^{\frac{1}{p}}$ and continuing in this manner, we obtain that 
$$x\in \bigcap_{n}(\Sym (\mathfrak{g}_{\bold{k}}))^{p^n}\mathcal{O}=\mathcal{O}.$$
We also get that 
$$Z_p\cap Z_{\HC}=Z_p^{G_{\bold{k}}}.$$
So $$\Gr(Z_p\cap Z_{\HC}) =\bold{k}[f_1^p,\cdots, f_n^p].$$
Hence $Z_{\HC}$ is a free 
$Z_p\cap Z_{\HC}$-module of rank $p^n.$ Therefore the natural ring homomorphism
$$Z_p\otimes_{Z_p\cap Z_{\HC}}Z_{\HC}\to Z(U(\mathfrak{g}_{\bold{k}}))$$
is a surjective homomorphism of free $Z_p$-modules of rank $p^{n}.$ Thus it is an isomorphism as desired.

%over $\mathfrak{g}_{\bold{k}}^{(1)}\subset Z_p.$ We want to show that the kernel of
%a Lie algebra homomorphism $\mathfrak{g}_{\bold{k}}^{(1)}\to m/m^2$ is contained in
%$Z(\mathfrak{g}_{\bold{k}}^{(1)}).$ Put $g'_i=g_i-a_i\in m, a_i\in\bold{k}.$
%Then $(g'_i)^p=a_{\alpha}g'^{\alpha}$ with $\alpha_1,\cdots,\alpha_n\in Z_p.$

\end{proof}

\section{Applications to rigidity of enveloping algebras}

Recall that given an associative flat $\mathbb{Z}$-algebra $R$ and a prime number $p,$
 then the center $Z(R/pR)$ of its reduction modulo $p$  acquires the natural Poisson bracket,
defined as follows. Given  central elements $a, b\in Z(R/pR)$, let $z, w\in R$ be their lifts respectively. 
Then the Poisson bracket $\lbrace a, b \rbrace$ is defined to be $$\frac{1}{p}[z, w] \mod p\in Z(R/pR).$$
 
\noindent  In particular, given a a Lie algebra $\mathfrak{g}$ over $S\subset\mathbb{C}$--a finitely generated ring
such that $\mathfrak{g}$ is a finite free $S$-module, then for a prime $p>0,$ the center 
$Z(U(\mathfrak{g}_p))$
of the enveloping algebra of $\mathfrak{g}_{p}=\mathfrak{g}/p\mathfrak{g}$ is equipped with the natural $S/pS$-linear Poisson bracket
defined as above. 

The significance  of understanding  $Z(U(\mathfrak{g}_p))$ (as a Poisson algebra)
in regards with the derived isomorphism problem above lies in its derived invariance: if $U(\mathfrak{g})$
and $U(\mathfrak{g'})$ are derived equivalent, then $Z(U(\mathfrak{g}_{p}))\cong Z(U(\mathfrak{g'}_{p}))$
as Poisson $S/pS$-algebras for $p\gg 0.$ This easily follows from the derived invariance of the Hochschild cohomology and
the Gersenhaber bracket (see [\cite{T1} Lemma 4]).

  Recall that if $I\subset Z(U(\mathfrak{g}_p))$ is a Poisson ideal,
then the Poisson bracket induces Lie bracket on $I/I^2.$ In particular, if $\mf{m}$ is a Poisson ideal such that $Z(U(\mathfrak{g}_p))/\mf{m}=S/pS,$
then $\mf{m}/\mf{m}^2$ is a finite $S/pS$-Lie algebra. 
Therefore, the collection of isomorphisms classes of $S/pS$-Lie algebras $\mf{m}/\mf{m}^2,$
as $\mf{m}$ ranges over  maximal Poisson ideals of $Z(U(\mathfrak{g}_p))$ (so $Z(U(\mathfrak{g}_p))/\mf{m}=S/pS$)
is a derived invariant of $U(\mathfrak{g})$ (for $p\gg 0$). The significance of this derived invariant of  $\mathfrak{g}$ is highlighted by
the fact that given a Poisson ideal $\mathfrak{m}\subset Z(U(\mathfrak{g}_p))$ as above, as the key computation by Kac and Radul shows (see Lemma \ref{key} below),
 there is a canonical Lie algebra
homomorphism  $\mathfrak{g}_{p}\to \mf{m}/\mf{m}^2$ (when $\mathfrak{g}$ is an algebraic Lie algebra).
There is one distinguished such maximal Poisson ideal--the augmentation ideal 
$\mathfrak{m}(\mathfrak{g}_p)=Z(U(\mathfrak{g}_p))\cap \mathfrak{g}_{p}U(\mathfrak{g}_{p}).$
%Then we put $\mathcal{L}_{p}(\mathfrak{g})=m(\mathfrak{g})/m(\mathfrak{g})^2.$

 Nex we need to recall a key computation of the Poisson bracket for restricted Lie
algebras due to Kac and Radul \cite{KR}. 
Let $R$ be a  commutative reduced ring of  characteristic $p>0.$
Let $\mathfrak{g}$ be a restricted Lie algebra over $R$
with the restricted structure map $x\to x^{[p]}, x\in \mathfrak{g}.$
 It is well-known that the map $x\to x^{p}-x^{[p]}$ induces
homomorphism of $R$-algebras $$i:\Sym (\mathfrak{g})\to Z_p(\mathfrak{g}),$$
where $Z_p(\mathfrak{g})$ is viewed as an $R$-algebra via the Frobenius map $F:R\to R.$
The homomorphism $i$ is an isomorphism when $R$ is perfect.
Recall also that
the Lie algebra bracket on $\mathfrak{g}$ defines the Kirillov-Kostant Poisson bracket on the symmetric algebra $\Sym(\mathfrak{g}).$

%As usual $W_2(\bf{k})$ denoted the ring of Witt vectors of length 2. Thus $W_2(\bf{k})$ is flat over
%$\mathbb{Z}/p^2\mathbb{Z}$ and $W_2(\bold{k})/pW_2(\bold{k})=\bf{k}.$

The following is the above mentioned key  result from \cite{KR}.

\begin{lemma}\label{key}
Let $S$ be a finitely generated integral domain over $\mathbb{Z}.$
Let $\mathfrak{g}$ be an algebraic Lie algebra over $S.$
Then $Z_p(\mathfrak{g}_p)$ is a Poisson subalgebra of $Z(U(\mathfrak{g}_p)),$ moreover the induced Poisson
bracket coincides with the negative of the Kirillov-Kostant bracket:
$$\lbrace a^p-a^{[p]}, b^p-b^{[p]}\rbrace=-([a, b]^p-[a,b]^{[p]}),\quad a\in \mathfrak{g}_p, b\in \mathfrak{g}_p.$$

\end{lemma}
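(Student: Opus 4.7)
The plan is to compute the Poisson bracket directly via lifts to characteristic zero. Regarding $\mathfrak{g}$ as a free $S$-module, choose lifts $\tilde a, \tilde b \in \mathfrak{g}$ of $a, b$ together with lifts $\tilde c, \tilde d \in \mathfrak{g}$ of $a^{[p]}, b^{[p]}$. Then $X := \tilde a^p - \tilde c$ and $Y := \tilde b^p - \tilde d$ lift $a^p - a^{[p]}$ and $b^p - b^{[p]}$ to $U(\mathfrak{g})$, and by definition of the Poisson bracket we must verify
\[
\tfrac{1}{p}[X, Y] \equiv -\bigl([a, b]^p - [a, b]^{[p]}\bigr) \pmod p.
\]
Expand $[X, Y] = [\tilde a^p, \tilde b^p] - [\tilde a^p, \tilde d] - [\tilde c, \tilde b^p] + [\tilde c, \tilde d]$ and analyse each summand modulo $p^2$.

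Two identities drive the computation. The first is the associative commutator formula $[u^p, v] = \sum_{k=1}^{p} \binom{p}{k} (\ad u)^k(v)\, u^{p-k}$, proved by induction on $p$. The second is the restricted axiom $(\ad \tilde a)^p(v) \equiv [\tilde c, v] \pmod p$ for $v \in \mathfrak{g}$, which via the multinomial Leibniz rule for iterated derivations propagates to $(\ad \tilde a)^p(\tilde b^p) \equiv [\tilde c, \tilde b^p] \pmod p$ (only the multinomials with one index equal to $p$ survive modulo $p$). Together these imply that the mod-$p$ parts of the four commutators telescope to zero, confirming $[X, Y] \in p \cdot U(\mathfrak{g})$, as must be the case since $X, Y$ are central modulo $p$. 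To extract $\tfrac{1}{p}[X, Y] \bmod p$ one then gathers the $p$-divisible contributions from two sources: the terms with $1 \le k \le p - 1$ in each $[u^p, v]$-expansion, using $\binom{p}{k}/p \equiv (-1)^{k-1}/k \pmod p$; and the ``defects'' $(\ad \tilde a)^p(v) - [\tilde c, v] \in p\cdot U(\mathfrak{g})$ together with their $\tilde b$-analogues.

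The main obstacle is the combinatorial collapse of these various $p$-divisible contributions to the clean residue $-([a, b]^p - [a, b]^{[p]})$. The summand $[a, b]^p$ should emerge from the iterated adjoint actions of $\tilde a, \tilde b$ on one another, reorganising $[\tilde a, \tilde b]$-factors into a $p$th power, while $-[a, b]^{[p]}$ comes from a final application of the restricted axiom to $[\tilde a, \tilde b]$ itself. A useful simplification is to note that both sides of the identity are Frobenius-semilinear in each of $a, b$ and compatible with the commutative Leibniz rule on $Z_p$; hence it suffices to verify the identity on a single generic pair in the free restricted Lie algebra on two generators, where the combinatorial check can be reduced to a finite computation by testing against a faithful representation (e.g., the regular representation of the enveloping algebra).
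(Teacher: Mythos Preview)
The paper does not give its own proof of this lemma: it is quoted verbatim as ``the above mentioned key result from \cite{KR}'' and attributed to Kac--Radul. So there is no in-paper argument to compare against; what matters is whether your sketch stands on its own.

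Your setup is sound. The lifts $X=\tilde a^p-\tilde c$, $Y=\tilde b^p-\tilde d$ are the right objects, the identity $[u^p,v]=\sum_{k=1}^p\binom{p}{k}(\ad u)^k(v)\,u^{p-k}$ is correct over $\mathbb{Z}$, and the multinomial Leibniz argument does show $(\ad\tilde a)^p(\tilde b^p)\equiv[\tilde c,\tilde b^p]\pmod p$, so that $[X,Y]\in p\,U(\mathfrak g)$. Likewise, the Frobenius--semilinearity of both sides in each variable is genuine and a useful reduction.

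The gap is exactly where you flag ``the main obstacle'': you never carry out the extraction of $\tfrac1p[X,Y]\bmod p$, and the proposed shortcut does not close it. Two concrete problems. First, the Poisson bracket here is \emph{defined} via a characteristic-zero lift over $S$; the free \emph{restricted} Lie algebra on two generators is a purely characteristic-$p$ object with no canonical $S$-form, so ``reduce to the free restricted case'' does not interact well with the very mechanism you need to compute. (The free Lie algebra over $\mathbb Z$ does lift, but its reduction mod $p$ carries no natural $[p]$-map, and in any case is not algebraic as the lemma requires.) Second, ``test against the regular representation of the enveloping algebra'' is not a finite check: that representation is $U(\mathfrak g)$ acting on itself, so you are back to computing the same commutator in $U(\mathfrak g)$. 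A faithful finite-dimensional representation would let you transport the identity \emph{from} $\mathfrak{gl}_n$, but you would still owe the computation there. In short, the combinatorial collapse to $-([a,b]^p-[a,b]^{[p]})$ is the entire content of the lemma, and it has not been supplied. If you want to salvage the approach, one honest route is to embed $\mathfrak g$ as a restricted subalgebra of $\mathfrak{gl}_n$ (using that $\mathfrak g$ is algebraic), note that $U(\mathfrak g)\hookrightarrow U(\mathfrak{gl}_n)$ over $S$ intertwines the Poisson brackets, and then actually perform the mod-$p^2$ commutator calculation in $U(\mathfrak{gl}_n)$ where $a^{[p]}$ is the honest matrix $p$th power; but the calculation must still be done.
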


We record the following simple result that illustrates usefulness of "dequantizing" $U(\mathfrak{g})$
to $\Sym(\mathfrak{g}_{\bold{k}})$ in regards with the isomorphism problem for enveloping algebras.

\begin{lemma}\label{trivial}
Let $\mathcal{L}$ be a finite dimensional Lie algebra over a field $\bold{k}.$
Let $\mf{m}$ be a Poisson ideal of $\Sym(\mathcal{L})$ such that $\Sym(\mathcal{L})/\mf{m}=\bold{k}.$
Then $\mf{m}/\mf{m}^2\cong \mathcal{L}$ as Lie algebra. In particular,
if $\mathcal{L}'$ is another Lie algebra over $\bold{k}$ such that
$\Sym(\mathcal{L})\cong \Sym(\mathcal{L'})$ as Poisson $\bold{k}$-algebras,
then $\mathcal{L}\cong\mathcal{L'}.$
\end{lemma}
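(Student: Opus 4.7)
The plan is to identify $\mf{m}$ with the vanishing ideal of a $\bold{k}$-point $\xi \in \mathcal{L}^*$, show that the Poisson-ideal hypothesis forces $\xi$ to annihilate $[\mathcal{L}, \mathcal{L}]$, and then produce the desired Lie algebra isomorphism by shifting the standard generators by $\xi$.

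Since $\Sym(\mathcal{L})/\mf{m} = \bold{k}$, the ideal $\mf{m}$ is the kernel of a $\bold{k}$-algebra homomorphism $\Sym(\mathcal{L}) \to \bold{k}$ and therefore has the form $\mf{m} = (x - \xi(x) : x \in \mathcal{L})$ for a unique $\xi \in \mathcal{L}^*$. For every $x, y \in \mathcal{L}$ the Poisson-ideal property applied to $y - \xi(y) \in \mf{m}$ gives
$$\{x, y - \xi(y)\} = [x, y] \in \mf{m},$$
so $\xi([x, y]) = 0$, i.e.\ $\xi$ vanishes on $[\mathcal{L}, \mathcal{L}]$. The Leibniz rule $\{ab, c\} = a\{b, c\} + \{a, c\}b$ ensures that the Poisson bracket descends to a $\bold{k}$-bilinear, skew-symmetric bracket on $\mf{m}/\mf{m}^2$ satisfying Jacobi. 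The map $\varphi : \mathcal{L} \to \mf{m}/\mf{m}^2$ defined by $\varphi(x) = x - \xi(x) + \mf{m}^2$ is a linear isomorphism, since it sends a basis of $\mathcal{L}$ to a basis of the cotangent space $\mf{m}/\mf{m}^2$; and the computation
$$\{\varphi(x), \varphi(y)\} = [x, y] + \mf{m}^2 = \varphi([x, y])$$
goes through purely because $\xi([x, y]) = 0$. This proves $\mf{m}/\mf{m}^2 \cong \mathcal{L}$ as Lie algebras.

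For the final assertion, given a Poisson isomorphism $\Phi: \Sym(\mathcal{L}) \xrightarrow{\sim} \Sym(\mathcal{L}')$, take $\mf{m} \subset \Sym(\mathcal{L})$ to be the augmentation ideal and set $\mf{m}' = \Phi(\mf{m}) \subset \Sym(\mathcal{L}')$; then $\mf{m}'$ is a Poisson ideal with $\Sym(\mathcal{L}')/\mf{m}' = \bold{k}$, and $\Phi$ induces a Lie algebra isomorphism $\mf{m}/\mf{m}^2 \cong \mf{m}'/\mf{m}'^2$. Applying the first part on both sides yields $\mathcal{L} \cong \mathcal{L}'$. There is no real obstacle here; the only substantive point is the forced vanishing of $\xi$ on $[\mathcal{L}, \mathcal{L}]$, without which the bracket $\{\varphi(x), \varphi(y)\}$ would acquire a spurious constant term and $\mf{m}/\mf{m}^2$ would be a central extension of $\mathcal{L}$ rather than $\mathcal{L}$ itself.
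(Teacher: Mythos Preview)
Your proof is correct and follows essentially the same approach as the paper: identify $\mf{m}$ with a point $\xi\in\mathcal{L}^*$, use the Poisson-ideal condition to force $\xi$ to be a character (i.e.\ $\xi|_{[\mathcal{L},\mathcal{L}]}=0$), and then observe that $x\mapsto x-\xi(x)$ gives the Lie algebra isomorphism $\mathcal{L}\cong\mf{m}/\mf{m}^2$. Your write-up is simply more explicit than the paper's, which compresses the argument to a couple of sentences.
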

\begin{proof}
Let $\mf{m}$ be a maximal Poisson ideal. Then $\mf{m}=(g-\chi(g), g\in \mathcal{L})$ for some
$\chi\in \mathcal{L}^{*}.$ It follows that $\chi$ must be a character of $\mathcal{L}$, hence
the homomorphism $g\to g-\chi(g)$ defines a Lie algebra isomorphism
$\mathcal{L}\to \mf{m}/\mf{m}^2$ as desired.

\end{proof}

\begin{proof}[Proof of Theorem \ref{iso}]

We may assume that Lie algebras $\mathfrak{g}, \mathfrak{g'}$ and the corresponding
derived isomorphism are defined over a finitely generated ring $S\subset\mathbb{C}.$
Let $Z(U(\mathfrak{g}))=S[f_1,\cdots, f_n]$ and $Z(U(\mathfrak{g'}))=S[f'_1,\cdots, f'_n].$
Thus we have an $S$-algebra isomorphism $S[f_1,\cdots, f_n]\cong S[f'_1,\cdots, f'_n]$  and a Poisson
algebra isomorphism
$Z(U(\mathfrak{g}_{\bold{k}}))\cong Z(U(\mathfrak{g'}_{\bold{k}})).$
Moreover these isomorphisms are compatible with reduction modulo $p$ maps
so that the following diagram commutes:

\[\begin{tikzcd}
S[f_1,\cdots,f_n] \arrow{r}{} \arrow[swap]{d}{} & S[f_1',\cdots, f_n'] \arrow{d}{} \\
Z(U(\mathfrak{g}_{\bold{k}})) \arrow{r}{} & Z(U(\mathfrak{g}'_{\bold{k}})).
\end{tikzcd}
\]

Let $\mf{m}$  be a maximal Poisson ideals in $Z(U(\mathfrak{g}_{\bold{k}}))$,
and let $\mf{n}\subset Z(U(\mathfrak{g'}_{\bold{k}}))$ be the corresponding maximal Poisson ideal under the above isomorphisms.
Hence we get an isomorphism of Lie algebras $\mf{m}/\mf{m}^2\cong \mf{n}/\mf{n}^2.$ 
Put 
$$\mathfrak{m}'=\mathfrak{m}\cap Z_p(\mathfrak{g}_{\bold{k}}),\quad \mathfrak{m}''=\mathfrak{m}\cap Z_{\HC}(\mathfrak{g}_{\bold{k}}), \mathfrak{m}_1=\mathfrak{m}'\cap \mathfrak{m}''.$$
Then Theorem \ref{center} implies the following Lie algebra isomorphism 
$$\mathfrak{m}/\mathfrak{m}^2\cong \mathfrak{m}'/\mathfrak{m}'^2\times_{\mf{m}_1/\mf{m}_1^2} \mf{m''}/\mf{m}''^2.$$ 
On the other hand, since the Poisson bracket vanishes on $Z_{\HC},$
we conclude that $\mf{m}/\mf{m}^2$ is a trivial central extension of the image of $\mf{m}'/\mf{m'}^2\cong \mathfrak{g}_{\bold{k}}$
and the kernel of the natural homomorphism $\mf{m'}/\mf{m'}^2\to \mf{m}/\mf{m}^2 $ is central. On the other hand, if $g\in Z(\mf{m}'/\mf{m'}^2)$, then 
$g=f^p$ with $f\in \mf{m}_1$, hence the image of $g$ in $\mf{m}/\mf{m}^2$ is 0.
Thus $\mf{m}/\mf{m}^2$ is a trivial central extension of $\mathfrak{g}_{\bold{k}}/Z(\mathfrak{g}_{\bold{k}}).$
Similarly  $\mf{n}/\mf{n}^2$ is isomorphic to
a trivial central extension  of $\mathfrak{g'}_{\bold{k}}/Z(\mathfrak{g'}_{\bold{k}}).$
Since we have the compatible isomorphism
$\bold{k}[g_1,\cdots, g_n]\cong \bold{k}[g'_1,\cdots, g'_n$] we get that 
$$\mathfrak{g}_{\bold{k}}/Z(\mathfrak{g}_{\bold{k}})\oplus V\cong \mathfrak{g}_{\bold{k}}/Z(\mathfrak{g'}_{\bold{k}})\oplus V'$$
with $V\cong V'$ being abelian $\bold{k}$-Lie algebras.
 Thus $\mathfrak{g}_{\bold{k}}/Z(\mathfrak{g}_{\bold{k}})\cong \mathfrak{g'}_{\bold{k}}/Z(\mathfrak{g'}_{\bold{k}}).$
Hence $\mathfrak{g}/Z(\mathfrak{g})\cong\mathfrak{g'}/Z(\mathfrak{g'}).$

\end{proof}

%Remark that for $x\in\mathcal{O}$, then $xU(\mathfrak{g}_{k}\cap Z_p$
%is generated by a central element $x_p\in\mathfrak{g}_{\bold{k}}^{(1)}.$

%To prove Theorem \ref{index1}, we follow the same approach,
%It suffices to show that $\Sym\mathfrak{g}^p[t_0]$ is a normal ring.
%The latter is isomorphic to $\Sym\mathfrak{g}_{\bold{k}}[z]/(z^p-fg^{p-1}).$
%Since this ring is Cohen-Macaulay, we just need to check that it is regular
%in codimension 1. The singular locus is the zero locus of 
%$d(fg^{p-1})\in \Omega^1(\Sym\mathfrak{g}_{\bold{k}}).$

%\begin{acknowledgement} 
\begin{acknowledgement} 
I am  very grateful to Lewis Topley for making numerous useful suggestions.

\end{acknowledgement}

%\end{acknowledgement}

\end{document}